\newtheorem{theorem}{Theorem}[section]
\newtheorem{lemma}[theorem]{Lemma}
\newtheorem{proposition}[theorem]{Proposition}
\newtheorem{definition}[theorem]{Definition}
\DeclareMathOperator{\Aut}{Aut}
\DeclareMathOperator{\Cay}{Cay}
\DeclareMathOperator{\dist}{\partial}
\DeclareMathOperator{\dgr}{dgr}
\DeclareMathOperator{\tr}{tr}
\DeclareMathOperator{\spec}{sp}
\def\e{\mbox{\boldmath $e$}}
\def\j{\mbox{\boldmath $j$}}
\def\m{\mbox{\boldmath $m$}}
\def\w{\mbox{\boldmath $w$}}
\def\vchi{\mbox{\boldmath $\chi$}}
\def\0{\mbox{\boldmath $0$}}
\def\A{\mbox{\boldmath $A$}}
\def\Alg{\mbox{${\cal A}$}}
\def\B{\mbox{\boldmath $B$}}
\def\C{\mbox{\boldmath $C$}}
\def\E{\mbox{\boldmath $E$}}
\def\I{\mbox{\boldmath $I$}}
\def\J{\mbox{\boldmath $J$}}
\def\S{\mbox{\boldmath $S$}}
\def\V{\mbox{\boldmath $V$}}
\def\W{\mbox{\boldmath $W$}}
\def\Par{\pi}
\def\exc{\mbox{$\varepsilon$}}
\def\G{\Gamma}
\def\Re{\mathbb R}
\def\Z{\mathbb Z}
\begin{document}
\title{Quotient-polynomial graphs
}

\author{M.A. Fiol
\\ \\
{\small Universitat Polit\`ecnica de Catalunya} \\
{\small Dept. de Matem\`atica Aplicada IV, Barcelona, Catalonia}\\
{\small E-mail: {\tt fiol@ma4.upc.edu}} \\
 }
\date{}

\maketitle

\begin{abstract}
As a generalization of orbit-polynomial and distance-regular graphs, we introduce
the concept of a quotient-polynomial graph. In these graphs every vertex $u$ induces the same regular partition  around $u$, where all vertices of each cell
are equidistant from $u$. 
Some properties and characterizations of such graphs are studied. For instance, all quotient-polynomial graphs are walk-regular and distance-polynomial. Also,  we show that every quotient-polynomial graph
generates a (symmetric) association scheme.
\end{abstract}

\noindent{\em Mathematics Subject Classifications:} 05E30, 05C50.

\noindent{\em Keywords:} Graph quotient; Distance-faithful partition; Walk-regular partition; Quotient-polynomial graph; Distance-regular graph; Eigenvalues; Orthogonal polynomials; Intersection numbers.

\section{Introduction and preliminaries}
As expected, the most interesting combinatorial structures are those bearing some kind of symmetry and/or regularity. In fact, in general, high symmetry imply high regularity, but the converse does not necessarily holds. Moreover, symmetric structures suggest definitions of new structures obtained, either by relaxing the conditions of symmetry, or having the same regularity properties as the original ones. In turn, the latter can give rise to new definitions by relaxing the mentioned symmetry conditions.
In graph theory, a good example of the above are the distance-transitive graphs, with automorphism group having orbits constituted by all vertices at a given distance. Attending to their symmetry, related concepts are the vertex-symmetric, symmetric, and orbit polynomial graphs \cite{be86,be87}.
Besides, concerning regularity, distance-transitive graphs can be generalized to distance-regular graphs \cite{b93,bcn89}, distance-polynomial graphs, and degree-regular graphs \cite{w82}.
In this paper, we introduce the concept of a quotient-polynomial graph, which could be thought of as the regular counterpart of orbit polynomial graphs.
In a quotient-polynomial graph, every vertex $u$ induces the same regular partition around $u$, with the additional condition that all vertices of each cell
are equidistant from $u$.
Some properties and characterizations of such graphs are studied. For instance, all quotient-polynomial graphs are walk-regular and distance-polynomial. Our study allows us to provide  a characterization of those distance-polynomial and vertex-transitive
graphs which are distance-regular. Also, we show that every quotient-polynomial graph
generates a (symmetric) association scheme.

Throughout this paper, $\G$ denotes a (connected) graph with vertex set $V$, edge set $E$, and diameter $D$. For every $u\in V$ and $i=0,\ldots,D$, let $\Gamma_i(u)$ denote the set of vertices at distance $i$ from $u$, with $\Gamma(u)=\Gamma_1(u)$, and  let $\e_u$ be the  characteristic ($u$-th unitary) vector of $\Gamma_0(u)$. The {\em eccentricity} of $u$, denoted by $\exc(u)$, is the maximum distance  between $u$ and any other vertex $v$ of $\G$. Let $\A_i$ be the $i$-th distance matrix, so that $\A=\A_1$ is the adjacency matrix of $\G$, with spectrum $\spec\G=\{\lambda_0^{m_0}, \ldots, \lambda_d^{m_d}\}$, where $\lambda_0>\lambda_1>\cdots >\lambda_d$, and the superscripts $m_i=m(\lambda_i)$ stand for the multiplicities. Let $\E_j$, $j=0,\ldots,d$ be the minimal idempotents representing the orthogonal projections on the $\lambda_j$-eigenspaces . Let $\Alg(\G)=\Re_d[\A]$ be the {\em adjacency algebra} of $\G$, that is, the algebra of all polynomials in $\A$ with real coefficients.

Following Fiol, Garriga and Yebra \cite{fg97,fgy96}, the $uv$-entry of  $\E_j$ is referred to as the {\it crossed $(uv$-$)$local
multiplicity\/} of the eigenvalue $\lambda_j$,  and it is denoted by
$m_{uv}(\lambda_j)$. In particular, for a regular graph on $n$ vertices, $\E_0=\frac{1}{n}\J$ and, hence, $m_{uv}(\lambda_0)=1/n$ for every $u,v\in V$.
Since $\A^{\ell}=\sum_{j=0}^d \lambda_j^{\ell} \E_j$, the number of walks of
length $\ell$ between two vertices $u,v$ is
\begin{equation}
\label{crossed-mul->num-walks}
a_{uv}^{({\ell})} =(\A^{\ell})_{uv}=
\sum_{j=0}^d m_{uv}(\lambda_j)\lambda_j^{\ell} \qquad (\ell \geq 0).
\end{equation}
In particular, the {\em $(u$-$)$local multiplicities} are $m_{u}(\lambda_i)=\|\E_i\e_u\|^2=(\E_i)_{uu}$, $i=0,\ldots,d$, and satisfy $\sum_{i=0}^d m_u(\lambda_i)=1$ and $\sum_{u\in V}m_u(\lambda_i)=m_i$, $i=0,\ldots,d$.

A graph $\G$ with diameter $D$ is called {\em $h$-punctually walk-regular}, for some $h=0,\ldots,D$, when the number of walks $a_{uv}^{({\ell})}$ for any pair of vertices $u,v$ at distance $h$ only depends on $\ell$.
From the above, this means that the crossed local multiplicities $m_{uv}(\lambda_j)$ only depend on $\lambda_j$ and we write it as $m_{h}(\lambda_j)$ (see
Dalf\'{o}, Van Dam, Fiol,  Garriga, and
Gorissen \cite{ddfgg11} for more details). Notice that, in particular, a $0$-punctually walk-regular graph is the same as a walk-regular graph, a concept introduced by Godsil and
McKay  \cite{gmc80} .

A partition $\Par=\{V_1,\ldots, V_m\}$ of the
vertex set $V$ is called {\em regular} (or {\em equitable})
whenever for any $i,j=1,\ldots,m$, the {\em intersection numbers} $b_{ij}(u)=\G(u)\cap V_j$, where $u\in V_i$, do not depend on the vertex $u$ but only on the subsets ({\em classes} or {\em cells}) $V_i$ and $V_j$. In this case, such numbers are simply written as $b_{ij}$, and the $m\times m$ matrix $\B=(b_{ij})$ is referred to as the {\em quotient matrix} of $\A$ with respect to $\Par$.

The {\em characteristic matrix} of (any) partition $\Par$ is the $n\times m$ matrix $\S=(s_{ui})$ whose $i$-th column is the characteristic vector of $V_i$, that is, $s_{ui}=1$ if $u\in V_i$, and $s_{ui}=0$ otherwise. In terms of such a matrix, it is known that $\Par$ is regular if and only if there exists an $m\times m$ matrix $\C$ such that
$$
\S\C=\A\S.
$$
Moreover, in this case, $\C=\B$, the quotient matrix of $\A$ with respect to $\Par$. Then, using this it easily follows that all the eigenvalues of $\B$ are also eigenvalues of $\A$.
For more details, see  Godsil \cite{g93}.

\section{Partitions around a vertex}

In this section we introduce several types of partitions bearing some regularity properties with respect to a given vertex $u$. We begin by considering those partitions where all vertices of the  same class are equidistant from $u$.

\begin{definition}
Let $\G$ have diameter $D$.
Given a vertex $u$, a  {\em ($u$-)distance-faithful} partition around $u$, denoted by $\Par(u)$, is a partition $V_0,V_1,\ldots,V_r$, with $r\ge \exc(u)$ such that $V_0=\{u\}$ and, for $i=1,\ldots,r$, every pair of vertices  $v,w\in V_i$ are at the same distance from $u$: $\dist(u,v)=\dist(u,w)$.
\end{definition}
Thus, in particular, $\Par(u)$ is a {\em distance partition around} $u$ whenever $v,w\in V_i$ if and only if $\dist(u,v)=\dist(u,w)=i$. In other words, $V_i=\Gamma_i(u)$ for every $i=0,\ldots,r$ and, hence, $r=\exc(u)$.


For every pair of vertices $u,v\in V$ we consider the vectors
of crossed local multiplicities
$$
\m(u,v)=((\E_0)_{uv},(\E_1)_{uv},\ldots,(\E_d)_{uv})
=(m_{uv}(\lambda_0),m_{uv}(\lambda_1),\ldots,m_{uv}(\lambda_d)),
$$
and numbers of $\ell$-walks for $\ell=0,\ldots,d$ between $u$ and $v$
$$
\w(u,v)=((\A^0)_{uv},(\A^1)_{uv},\ldots,(\A^{d})_{uv})
=(a^{(0)}_{uv},a^{(1)}_{uv},\ldots,a^{(d)}_{uv}).
$$
The following result is an immediate consequence of \eqref{crossed-mul->num-walks} (see, for instance, \cite{ddfgg11}).
\begin{lemma}
Given some vertices $u,v,x,y$, we have $\m(u,v)=\m(x,y)$ if and only if
$\w(u,v)=\w(x,y)$.
\end{lemma}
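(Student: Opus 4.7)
The plan is to read equation~\eqref{crossed-mul->num-walks} as a linear map from the crossed-local-multiplicity vector $\m(u,v)$ to the walk-count vector $\w(u,v)$. Evaluating the identity $a_{uv}^{(\ell)}=\sum_{j=0}^d m_{uv}(\lambda_j)\lambda_j^{\ell}$ at $\ell=0,1,\ldots,d$ and stacking the resulting $d+1$ equations into one matrix identity, I would write
$$
\w(u,v)^{\top}=\V\,\m(u,v)^{\top},
$$
where $\V=(\lambda_j^{\ell})_{\ell,j=0,\ldots,d}$ is the $(d+1)\times(d+1)$ matrix whose entries are powers of the distinct eigenvalues $\lambda_0>\lambda_1>\cdots>\lambda_d$. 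The same identity applied to the pair $(x,y)$ gives $\w(x,y)^{\top}=\V\,\m(x,y)^{\top}$.

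The forward implication is then immediate: if $\m(u,v)=\m(x,y)$, left-multiplication by $\V$ yields $\w(u,v)=\w(x,y)$. For the converse, I would invoke the fact that a Vandermonde matrix in pairwise distinct nodes is nonsingular; hence $\V$ is invertible, and the relations
$$
\m(u,v)^{\top}=\V^{-1}\w(u,v)^{\top},\qquad \m(x,y)^{\top}=\V^{-1}\w(x,y)^{\top}
$$
allow $\m$ to be recovered uniquely from $\w$. In particular, $\w(u,v)=\w(x,y)$ forces $\m(u,v)=\m(x,y)$.

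No real obstacle is expected: the only nontrivial ingredient is the nonsingularity of Vandermonde matrices with distinct nodes, which is guaranteed here by the strict ordering $\lambda_0>\lambda_1>\cdots>\lambda_d$ of the spectrum of $\A$. The lemma thus reduces to the observation that $\m(u,v)$ and $\w(u,v)$ are two equivalent encodings of the same spectral data, related by an invertible change of basis; note also that the truncation at $\ell=d$ is tight, since there are exactly $d+1$ distinct eigenvalues and therefore $d+1$ unknowns $m_{uv}(\lambda_j)$ to be determined.
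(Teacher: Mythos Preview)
Your proof is correct and is exactly the argument the paper has in mind: the lemma is stated as ``an immediate consequence of \eqref{crossed-mul->num-walks}'', and your Vandermonde computation is precisely how one unpacks that remark. There is nothing to add.
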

Then, we can define the two following equivalent concepts:
\begin{definition}
A partition $\Par(u)=\{U_0,U_1,\ldots, U_r\}$ is {\em $(u$-$)$walk-regular} $($or {\em $(u$-$)$-spectrum-regular}$)$ around a vertex $u\in V$ if, for every $i=0,\ldots,r$ the set $U_i$ is constituted by all vertices $v\in V$ with the same vector  $\w(u,v)$  $($or, equivalently, $\m(u,v))$.
\end{definition}
Now we will prove that, if a partition is both regular and $u$-distance-faithful, then it is  also $u$-walk-regular.
Before that, we have the following straightforward lemma.

\begin{lemma}
\label{wr->df}
Every walk-regular partition $\Par(u)$  around a vertex $u\in V$ is also distance-faithful around the same vertex.
\end{lemma}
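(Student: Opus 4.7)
The plan is to recover $\partial(u,v)$ directly from the walk vector $\w(u,v)$ and then conclude that vertices sharing their walk vector must lie at the same distance from $u$. Two classical observations suffice. First, in any connected graph the diameter $D$ is bounded by $d$, the number of distinct eigenvalues of $\A$ minus one; hence every distance from $u$ lies in $\{0,1,\ldots,d\}$, and the walk vector $\w(u,v)$ of length $d+1$ is long enough to "see" the distance. Second, because there is no walk of length strictly less than $\partial(u,v)$ between $u$ and $v$, while any geodesic provides one of length exactly $\partial(u,v)$, we have the characterisation
\[
\partial(u,v) \;=\; \min\bigl\{\ell\in\{0,1,\ldots,d\} \;:\; a_{uv}^{(\ell)}>0\bigr\}.
\]

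Given this, suppose $v,w\in U_i$ for some class of the $u$-walk-regular partition $\Par(u)$. By the definition of walk-regular around $u$, $\w(u,v)=\w(u,w)$, so the two $(d+1)$-tuples of walk counts agree entry by entry. In particular, the smallest index at which the tuple becomes positive is the same for both, and the displayed formula reads off $\partial(u,v)=\partial(u,w)$. This is precisely the condition required for $\Par(u)$ to be distance-faithful around $u$.

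It only remains to reconcile the indexing conventions of the two definitions. Labelling $U_0$ as the class that contains $u$, observe that the $0$-th coordinate of $\w(u,v)$ equals $1$ when $v=u$ and $0$ otherwise; hence $U_0=\{u\}$ is forced, matching the requirement $V_0=\{u\}$ in the distance-faithful definition. Moreover, every value $0,1,\ldots,\exc(u)$ is attained as a distance from $u$, and vertices at different distances lie in different classes (by what we just proved), so the partition has at least $\exc(u)+1$ classes, giving $r\ge\exc(u)$. The only ingredient beyond bookkeeping is the bound $D\le d$, which is standard; everything else follows immediately from the definitions and from the equivalence between $\m(u,v)$ and $\w(u,v)$ recorded just above the statement.
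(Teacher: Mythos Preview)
Your proof is correct and uses essentially the same idea as the paper: the distance $\partial(u,v)$ is the first index $\ell$ with $a_{uv}^{(\ell)}>0$, so vertices with identical walk vectors must be equidistant from $u$. The paper phrases this as a two-line contradiction (if $\partial(u,v)>\partial(u,w)=\ell$ then $a_{uw}^{(\ell)}\neq 0$ while $a_{uv}^{(\ell)}=0$), whereas you argue directly and also verify the residual clauses of the distance-faithful definition ($U_0=\{u\}$ and $r\ge \exc(u)$), which the paper leaves implicit.
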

\begin{proof}
By contradiction, assume that $v,w\in U_i$ and $\dist(u,v)>\dist(u,w)=\ell$. Then, we would have $a_{uw}^{(\ell)}\neq 0$ but $a_{uv}^{(\ell)}= 0$, against the hypothesis of $u$-walk-regularity.
\end{proof}

\begin{proposition}
Let $\sigma(u)=\{V_0,\ldots,V_s\}$ be a $u$-distance-faithful and regular partition. Then $\sigma(u)$ defines a  $u$-walk-regular partition $\Par(u)=\{U_0,\ldots,U_r\}$ with $r\le s$ (by the union of some sets $V_i$, if necessary).
\end{proposition}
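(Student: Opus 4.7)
The plan is to establish, by induction on the walk length $\ell$, that for every $v\in V_i$ the entry $(\A^\ell)_{uv}$ depends only on the index $i$ of the cell containing $v$ (and on $\ell$). Once this is proven, the vector $\w(u,v)$ is constant on each cell $V_i$; by the preceding lemma the vector $\m(u,v)$ is then also constant on each cell, so the partition $\sigma(u)$ refines the walk-regular partition $\Par(u)$. In particular $\Par(u)$ is obtained from $\sigma(u)$ by merging those cells $V_i$ that happen to share the same walk-vector, giving $r\le s$.

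For the base case $\ell=0$, note that distance-faithfulness guarantees $V_0=\{u\}$, so $(\A^0)_{uv}=\delta_{uv}$ is $1$ on $V_0$ and $0$ on every other $V_i$. For the inductive step, assuming $(\A^{\ell-1})_{uw}=a^{(\ell-1)}_j$ whenever $w\in V_j$, I would expand
\[
(\A^\ell)_{uv}=\sum_{w\sim v}(\A^{\ell-1})_{uw}=\sum_{j=0}^{s}\;\sum_{w\in V_j\cap\Gamma(v)}(\A^{\ell-1})_{uw}=\sum_{j=0}^{s}a^{(\ell-1)}_j\,b_{ij},
\]
where the last equality uses the inductive hypothesis together with regularity of $\sigma(u)$, since $|V_j\cap\Gamma(v)|=b_{ij}$ depends only on $i$ (the cell of $v$) and on $j$. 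The resulting value depends only on $i$ and $\ell$, completing the induction.

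With that in hand, every two vertices $v,w$ lying in the same cell $V_i$ satisfy $\w(u,v)=\w(u,w)$, hence $\m(u,v)=\m(u,w)$ by the lemma, so they belong to the same class of the walk-regular partition. Therefore the classes $U_k$ of $\Par(u)$ are unions of cells of $\sigma(u)$, proving $r\le s$. I would also remark that the ordering with $U_0=\{u\}$ is consistent, since $u$ is the unique vertex at distance zero from itself and $\w(u,u)$ is not shared with any other vertex.

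The only subtlety I anticipate is bookkeeping at the base of the induction: one must use $V_0=\{u\}$ explicitly (which is precisely the content of distance-faithfulness at the first cell) to ensure the length-$0$ walk count is cell-determined; without this, $(\A^0)_{uv}$ would vary within a cell containing both $u$ and other vertices and the induction would fail. Apart from this, the argument is a direct recurrence driven by the quotient matrix $\B$ of $\sigma(u)$.
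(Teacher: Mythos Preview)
Your proof is correct and follows essentially the same approach as the paper: induction on the walk length $\ell$, using the quotient matrix of the regular partition to pass from $\ell$ to $\ell+1$, and then merging cells that share the same walk-vector. Your treatment of the base case $\ell=0$ (via $V_0=\{u\}$) and your discussion of why $U_0=\{u\}$ are in fact more explicit than the paper's, which simply declares the result ``clear for $\ell\le 1$''.
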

\begin{proof}
To prove that the number of walks $a_{uv}^{(\ell)}$, with $v\in V_i$, only depend on $i$ and $\ell$, we use induction on $\ell$. The result is clear for $\ell\le 1$. Now suppose that the result holds for some $\ell>1$. Then, for a given $v\in V_i$,
\begin{equation}
\label{lin-sys}
a_{uv}^{(\ell+1)}=\sum_{j=0}^r b_{ji} a_{uv}^{(\ell)},\qquad i=0,\ldots,r.
\end{equation}
and, hence, $a_{uv}^{(\ell+1)}=a_i^{(\ell)}$ does not depend on $v$. Finally, if there are sets $V_{i_1},V_{i_2},\ldots$ with vertices $v$ having the same vector $\w(u,v)$, we consider their union $U_i=V_{i_1}\cup V_{i_2}\cup\cdots$ to form the claimed $u$-walk-regular partition.
\end{proof}

To prove the converse, we need an extra hypothesis, which in fact leads to a stronger result in terms of the new concept defined below.

Given a vertex $u$ of a graph $\Gamma$, the so-called {\em $u$-local spectrum} is constituted by those eigenvalues $\lambda_i$ of $\Gamma$ such that $\E_i\e_u\neq \0$ (that is, with nonzero $u$-local multiplicity $m_u(\lambda_i)$). Moreover, these are referred to as the {\em $u$-local eigenvalues}.
Let us consider the vector space  $\Alg(u)$ spanned by the vectors $\A^{\ell}\e_u$, $\ell=0,\ldots,d$. Then, it is known that $\Alg(u)$ has dimension $d_u+1$ and basis $\e_u, \A\e_u,\ldots, \A^{d_u}\e_u$; see e.g. \cite{fgy96,fg97}.

\begin{definition}
\label{defi1}
Let $u$ be a vertex with $d_u+1$ distinct local eigenvalues. Let $\Par(u)=\{U_0,\ldots,U_r\}$ be a $u$-walk-regular partition, with $\vchi_i$ being the characteristic vector of $U_i$ $($note that $\vchi_0=\e_u$$)$. Then $\Par(u)$ is said to be {\em quotient-polynomial} whenever $\vchi_i\in \Alg(u)$ for every $i=0,\ldots,d_u$.
\end{definition}

In the following result the walk-regular partitions that are quotient-polynomial (and regular) are characterized.

\begin{theorem}
\label{ineq&equ}
Let $u$ be a vertex with  $d_u+1$ distinct local eigenvalues. Let $\Par(u)=\{U_0,\ldots,U_r\}$ be a $u$-walk-regular partition. Then,
\begin{equation}
\label{main-ineq}
r\ge d_u,
\end{equation}
with equality if and only if $\Par(u)$ is a quotient-polynomial partition.
Moreover, in this case  $\Par(u)$ is also regular.
\end{theorem}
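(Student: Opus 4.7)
The core idea is a dimension comparison between two natural subspaces of $\mathbb{R}^n$: the space $W=\mathrm{span}(\vchi_0,\ldots,\vchi_r)$ of vectors that are constant on each cell of $\Par(u)$, which has dimension $r+1$ (the $\vchi_i$'s are linearly independent since they have pairwise disjoint supports), and the local adjacency space $\Alg(u)$, which by hypothesis on the local eigenvalues has dimension $d_u+1$. The plan is to show $\Alg(u)\subseteq W$ and read off \eqref{main-ineq} and its equality case by a dimension count, and then handle regularity separately as a short polynomial computation.

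The inclusion $\Alg(u)\subseteq W$ is the technical heart. I unpack $u$-walk-regularity: the cells $U_i$ are precisely the fibers of the map $v\mapsto \w(u,v)$, so for every $\ell$ and every $v$ in a fixed $U_i$, the entry $(\A^{\ell}\e_u)_v=a_{uv}^{(\ell)}$ depends only on $i$. Hence each basis vector $\A^{\ell}\e_u$ of $\Alg(u)$ is constant on every cell, i.e. lies in $W$. Since a basis of $\Alg(u)$ sits inside $W$, we get $\Alg(u)\subseteq W$, and therefore $d_u+1\leq r+1$, proving \eqref{main-ineq}. Equality $r=d_u$ forces $\Alg(u)=W$, and this is the same as saying $\vchi_i\in\Alg(u)$ for every $i=0,\ldots,d_u$, which is exactly Definition~\ref{defi1}. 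This settles the "iff" for the quotient-polynomial condition.

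For the closing claim that equality also implies regularity, I use that each $\vchi_i\in\Alg(u)$ is of the form $\vchi_i=p_i(\A)\e_u$ for some polynomial $p_i$. Then
$$
\A\vchi_i=\A\,p_i(\A)\e_u\in\Alg(u)=W,
$$
so there exist scalars $c_{ij}$ with $\A\vchi_i=\sum_{j=0}^{r}c_{ij}\vchi_j$. Taking the $v$-th coordinate for any $v\in U_j$ gives $|\G(v)\cap U_i|=c_{ij}$, independent of the chosen $v\in U_j$; this is precisely the definition of a regular partition.

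I do not anticipate a substantive obstacle: the whole argument is one dimension count plus one polynomial substitution. The only points deserving care are (i) verifying that $u$-walk-regularity really yields $\A^{\ell}\e_u\in W$ for every $\ell$ (including $\ell>d$, which is automatic by Cayley--Hamilton applied to $\A$, or directly since \eqref{crossed-mul->num-walks} expresses $a^{(\ell)}_{uv}$ in terms of the $\m(u,v)$), and (ii) in the last step, confirming that the coefficients $c_{ij}$ extracted from the linear combination $\A\vchi_i=\sum_j c_{ij}\vchi_j$ match the intersection numbers $b_{ji}(v)$ of Section~1, which is immediate from the definitions.
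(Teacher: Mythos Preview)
Your proof is correct and, for the inequality \eqref{main-ineq} and its equality case, follows essentially the same line as the paper: both establish the inclusion $\Alg(u)\subseteq W=\Alg(\Par(u))$ (your observation that each $\A^{\ell}\e_u$ is constant on cells is exactly the paper's equation \eqref{powers-vs-intersec}) and then compare dimensions. The one place you diverge is the proof of regularity. The paper sets up the matrices $\W$ and $\W^{+}$ of walk counts, shows $\W\B^{\top}=\W^{+}$, and concludes $\B^{\top}=\W^{-1}\W^{+}$, so the intersection numbers are determined independently of the chosen representatives. You instead note that $\A\vchi_i\in\Alg(u)=W$, expand $\A\vchi_i=\sum_j c_{ij}\vchi_j$, and read off $|\G(v)\cap U_i|=c_{ij}$ for every $v\in U_j$. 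Your argument is shorter and avoids inverting $\W$; the paper's argument has the side benefit of producing explicit formulas for both $\B$ and the polynomials $p_i$ in terms of walk data, which it exploits in the worked example of Section~\ref{Qp-graphs}.
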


\begin{proof}
For $i=0,\ldots,r$, let $a_i^{(\ell)}$ the common value of the number of $\ell$-walks from $u$ to every $v\in U_i$, and
let us consider the vector space $\Alg(\Par(u))=\langle \vchi_0(=\e_u), \vchi_1,\ldots,\vchi_{r}\rangle$. Then, as
\begin{equation}
\label{powers-vs-intersec}
 \A^{\ell}\e_u=a_0^{(\ell)}\vchi_0+\cdots+a_{r}^{(\ell)}\vchi_{r}\qquad (\ell\ge 0),
\end{equation}
we have that $\Alg(u)\subset\Alg(\Par(u))$ and, hence,
\begin{equation}
\label{ineq0}
d_u+1=\dim\Alg(u)\le  \dim  \Alg(\Par(u))=r+1,
\end{equation}
which proves \eqref{main-ineq}. Of course, the same conclusion can be reached by considering  the common value $m_{ij}$ of the crossed local multiplicities $m_{uv}(\lambda_j)$, for every $v\in U_i$. Then,
$$
\E_j\e_u=m_{0j}\vchi_0+\cdots+m_{dj}\vchi_d\qquad (0\le j\le d).
$$
(Only $d_u+1$ of the above equations are not trivially null).

If $r=d_u$, we have that $\Alg(u)=\Alg(\Par(u))$, that is, every vector $\vchi_i$ is a linear combination of the vectors $\A^{\ell}\e_u$ for $i,\ell=0,\ldots,r$, and $\Par(u)$ is a quotient-polynomial partition.
Conversely, if  $\Par(u)$ is quotient-polynomial,  we have $\Alg(\Par(u))\subseteq \Alg(u)$. Hence, $r\le d_u$ which, together with \eqref{ineq0} ($\Par(u)$ is also walk-regular), leads to $r=d_u$.

In fact, the constants of the above linear combinations, which are the coefficients of the polynomials
$p_i(x)=\omega_{i0}+\omega_{i1}x+\cdots+\omega_{ir}x^r$, can be computed in the following way:
The $r+1$ first equations in \eqref{powers-vs-intersec} are, in matrix form,
$$
\left(
\begin{array}{cccc}
a_0^{(0)} & 0 & \cdots & 0 \\
a_0^{(1)} & a_1^{(1)} & \cdots & a_r^{(1)}\\
 & & \ddots & \\
a_0^{(r)} & a_1^{(r)} & \cdots & a_r^{(r)}\\
\end{array} \right)\left(
\begin{array}{c}
\e_u\\
\vchi_1\\
\vdots \\
\vchi_r
\end{array} \right)
=\left(
\begin{array}{c}
\e_u\\
\A\e_u\\
\vdots \\
\A^r\e_u
\end{array} \right).
$$
But the coefficient matrix $\W$ with entries  $(\W)_{\ell i}=a_{i}^{(\ell)}$, for $\ell,i=0,1,\ldots,r$, is a change-of-basis matrix and, hence, it is invertible.
As a consequence, for every $i=0,1,\ldots, r$, the coefficients $\omega_{i0}$, $\omega_{i0}$,\ldots, $\omega_{i0}$ of $p_i$ correspond to the $i$-th row of  $\W^{-1}$.

Finally,  to prove that $\Par(u)$ is regular, let us choose one vertex $u_i$ in each $U_i$, $i=0,1,\ldots, r$ and  consider the  $(r+1)\times (r+1)$ matrices $\B$ and  $\W^+$,  with entries  $(\B)_{ij}=|\Gamma_1(v_i)\cap U_j|$ and $(\W^+)_{\ell i}=a_{i}^{(\ell+1)}$, $\ell,i,j=0,1,\ldots,r$, respectively. Then, the $r+1$ equations of \eqref{lin-sys} can be written as
$$
\W\B^{\top}=\W^+.
$$
Hence, the entries $b_{ji}$ of the matrix $\B^{\top}=\W^{-1}\W^+$ do not depend on the chosen vertices $v_i$, and the partition is regular with quotient matrix $\B=(b_{ij})$.
\end{proof}

All the above result can be summarized in the followin theorem.
\begin{theorem}
Let $u$ be a vertex of a  graph $\G$.
Let $\Par(u)$ be a partition  with $r+1$ classes around $u$ having $d_u+1$ distinct local eigenvalues $\mu_0>\mu_1>\cdots>\mu_{d_u}$.
Then, the following assertions are equivalent:
\begin{itemize}
\item[$(a)$]
The partition $\Par(u)$ is quotient-polynomial.
\item[$(b)$]
The partition $\Par(u)$ is $u$-walk-regular with $r=d_u$.
\item[$(c)$]
There exist  polynomials $p_i$ with $\deg p_i\le r$, $i=0,\ldots,r$, such that $\vchi_i=p_i(\A)\e_u$.
 \item[$(d)$]
$\Alg(u)=\Alg(\Par(u))$ with basis $\vchi_0(=\e_u),\vchi_1,\ldots,\vchi_r$.
\end{itemize}
\end{theorem}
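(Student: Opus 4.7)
The plan is to establish the four-way equivalence via the cycle (a) $\Rightarrow$ (b) $\Rightarrow$ (c) $\Rightarrow$ (d) $\Rightarrow$ (a), using Theorem \ref{ineq&equ} as the principal tool.

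The implication (a) $\Rightarrow$ (b) is immediate from Definition \ref{defi1} (quotient-polynomial partitions are walk-regular by assumption) together with the equality case in \eqref{main-ineq}, which gives $r = d_u$. For (b) $\Rightarrow$ (c), I would recycle the constructive matrix argument inside the proof of Theorem \ref{ineq&equ}: the matrix $\W$ defined by $(\W)_{\ell i} = a_i^{(\ell)}$ is the invertible change-of-basis between $\{\e_u, \A\e_u, \ldots, \A^r\e_u\}$ and $\{\vchi_0, \ldots, \vchi_r\}$, and the sought polynomials $p_i$ are then read off from the rows of $\W^{-1}$, each of degree at most $r$.

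For (c) $\Rightarrow$ (d), the key observation is that $\vchi_i = p_i(\A)\e_u$ with $\deg p_i \le r$ places each $\vchi_i$ inside $V_r := \langle \e_u, \A\e_u, \ldots, \A^r\e_u \rangle \subseteq \Alg(u)$, and linear independence of the $r+1$ characteristic vectors forces $\dim V_r = r+1$, so that both $\{\e_u, \A\e_u, \ldots, \A^r\e_u\}$ and $\{\vchi_0, \ldots, \vchi_r\}$ are bases of $V_r = \Alg(\Par(u))$. To upgrade this to $\Alg(\Par(u)) = \Alg(u)$ (equivalently, $r = d_u$), I would exploit that $V_r$ is a cyclic subspace cut out by characteristic vectors of a partition: the relation $\B^{\top} = \W^{-1}\W^+$ from the last part of the proof of Theorem \ref{ineq&equ} yields well-defined intersection numbers and hence regularity, from which walk-regularity for all walk lengths and the identification $r = d_u$ follow. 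Finally, (d) $\Rightarrow$ (a) is immediate: $\vchi_i \in \Alg(u)$ holds by hypothesis, the dimension equality $r+1 = d_u+1$ gives $r = d_u$, and walk-regularity follows because each $\A^\ell\e_u \in \Alg(u) = \Alg(\Par(u))$ is a linear combination of the $\vchi_j$'s, forcing $a_{uv}^{(\ell)}$ to depend only on the cell containing $v$.

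The main obstacle is the step (c) $\Rightarrow$ (d), where the a priori weak polynomial condition must be strengthened to identify $V_r$ with $\Alg(u)$. The delicate point is to argue that the partition is regular, not merely that its characteristic vectors lie in a cyclic subspace; this is precisely what promotes walk-regularity from walks of length $\le r$ (which are immediate once $V_r = \Alg(\Par(u))$) to walks of arbitrary length, and pins down $r = d_u$ via \eqref{main-ineq}.
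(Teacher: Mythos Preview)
Your implications (a)$\Rightarrow$(b), (b)$\Rightarrow$(c), and (d)$\Rightarrow$(a) are fine and match how the paper uses Theorem~\ref{ineq&equ}. The gap is in (c)$\Rightarrow$(d), and it is exactly the point you flag as ``delicate'' but do not actually resolve.

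From (c) you correctly deduce that $V_r:=\langle \e_u,\A\e_u,\ldots,\A^r\e_u\rangle=\Alg(\Par(u))$ and hence $r\le d_u$. To finish you must show $r\ge d_u$. You propose to invoke the relation $\B^{\top}=\W^{-1}\W^{+}$ from the proof of Theorem~\ref{ineq&equ}. But that relation encodes the identities $a_i^{(\ell+1)}=\sum_j b_{ij}\,a_j^{(\ell)}$ for $\ell=0,\ldots,r$, and to write these down one needs the numbers $a_i^{(\ell)}$ to be well-defined for $\ell$ up to $r+1$; equivalently, one needs $\A^{r+1}\e_u\in V_r$. Since $V_r$ is a Krylov subspace, this holds precisely when $r\ge d_u$, which is what you are trying to prove. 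Put differently, regularity of $\Par(u)$ is equivalent to $\A\vchi_i\in V_r$ for all $i$; but $\A\vchi_i=(xp_i)(\A)\e_u$ lies a priori only in $V_{r+1}$, and if $r<d_u$ then $V_{r+1}\supsetneq V_r$ and at least one $\A\vchi_i$ escapes (since otherwise $\A^{r+1}\e_u\in V_r$). So the regularity argument is circular.

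The paper avoids this by a direct polynomial-annihilator trick: summing the identities in (c) gives $q(\A)\e_u=\j$ with $q=\sum_i p_i$ of degree at most $r$; then (using that $\j$ is an eigenvector, $\A\j=k\j$) one gets $\big((x-k)q(x)\big)(\A)\e_u=\0$, a nonzero polynomial of degree at most $r+1$ annihilating $\e_u$. The minimal such polynomial has degree $d_u+1$, so $r+1\ge d_u+1$, whence $r=d_u$ and $\Alg(u)=\Alg(\Par(u))$. This is the missing idea in your (c)$\Rightarrow$(d).
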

\begin{proof}
By Theorem \ref{ineq&equ}, we only need to prove
$(c)\Rightarrow (d)$: From $(c)$ we have that $\Alg(\Par(u))\subset \Alg(u)$ and hence $r\le d_u$. Also, as $\sum_{i=0}^r \vchi_i=q(\A)\e_u=\j$, where $q=\sum_{i=0}^r p_i$,
we get $q(\A)(\A-k\I)\e_u=\0$, where the left-hand side is a polynomial with degree at most $r+1$.
Moreover,  $m(x)=\prod_{i=0}^{d_u}(x-\mu_i)$, with $\dgr m=d_u+1$, is the polynomial of minimum degree satisfying $m(\A)\e_u=\0$
(see \cite{fgy96}). Therefore, $r\ge d_u$ and, hence, $r=d_u$, $\Alg(u)=\Alg(\Par(u))$, and  $\e_0,\vchi_1,\ldots,\vchi_r$ is a basis.
\end{proof}

\section{Quotient-polynomial graphs}
\label{Qp-graphs}
In this section we study the graphs having the same (i.e. with the same parameters) quotient-polynomial partition around each of their vertices. With this aim, we now follow a global approach.

Let $\G$ be a graph with vertex set $V$, $d+1$ distinct eigenvalues, and adjacency algebra $\Alg(\G)=\langle \I,\A,\ldots,\A^d\rangle$.
Then a partition $\Par(J)=\{J_0,J_1,\ldots, J_r\}$ of $V\times V$  is called {\em walk-regular} whenever each $J_i$ is the set with elements $(u,v)$ having identical vector $\m(u,v)$ (or $\w(u,v)$).
So, from Lemma \ref{wr->df}, all pairs of vertices in a given $J_i$ are at the same distance, and we assume that the pairs in $J_0$ are of the form $(u,u)$ (distance zero).
Let $\J_{i}$, $i=0,\ldots,r$, the $n\times n$ matrices, indexed by the vertices of $\G$,
representing the equivalence classes $J_i$, that is,
\begin{equation}
\label{equiv-matrices}
(\J_i)_{uv}=\left\{
\begin{array}{ll}
1 & \mbox{{\rm if $(u,v)\in J_i$,}}\\
0 & \mbox{\rm otherwise.}
\end{array}\right.
\end{equation}
Let $J_h$ be an equivalence class with elements $(u,v)$ satisfying $\dist(u,v)=h$. Then,  $\J_h=\A_h$ if and only if $\G$ is $h$-punctually walk-regular and, in particular, $\J_0=\I$ if and only if $\G$ is walk-regular.

From these matrices we can now define our main concept:
\begin{definition}
A graph $\G$, with walk-regular partition $\Par(J)=\{J_0,J_1,\ldots, J_r\}$ and adjacency algebra $\Alg(\G)$, is {\em quotient-polynomial} if $\J_i\in \Alg(\G)$ for every $i=0,\ldots,r$.
\end{definition}

Thus, $\G$ is quotient-polynomial if and only if there exist polynomials $p_i$, with $\deg p_i\le d$, such that $p_i(\A)=\J_i$, $i=0,\ldots,r$ (this inspired our definition).
In fact, the following result shows that this only happens when $r=d$.
We omit its proof since it goes along the same lines of reasoning as that of theorem \ref{ineq&equ}.

\begin{theorem}
\label{ineq&equ}
Let $\G$ be a graph as above. Let $\Par(J)=\{J_0,\ldots,J_r\}$ be a walk-regular partition. Then,
\begin{equation}
\label{ineq}
r\ge d,
\end{equation}
and equality occurs if and only if $\G$ is quotient-polynomial.
\end{theorem}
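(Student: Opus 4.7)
The plan is to mirror the proof of the local version of this theorem, replacing the starting vector $\e_u$ by the identity and the characteristic vectors $\vchi_i$ of the classes $U_i$ by the indicator matrices $\J_i$ of the classes $J_i$. Concretely, I introduce the subspace $\Alg(\Par(J)):=\langle \J_0,\J_1,\ldots,\J_r\rangle$ of real $n\times n$ matrices. Since the classes $J_0,\ldots,J_r$ partition $V\times V$, the matrices $\J_i$ have pairwise disjoint supports (and nonzero entries only equal to $1$), so they are linearly independent. Hence $\dim\Alg(\Par(J))=r+1$.

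The next step uses walk-regularity in an essential way. By hypothesis, for every $\ell\ge 0$ the entry $(\A^\ell)_{uv}=a_{uv}^{(\ell)}$ depends only on the class $J_i$ containing $(u,v)$; call this common value $a_i^{(\ell)}$. Then
$$
\A^\ell=\sum_{i=0}^r a_i^{(\ell)}\J_i\qquad (\ell\ge 0),
$$
which is the global analogue of the identity $\A^\ell\e_u=\sum_i a_i^{(\ell)}\vchi_i$ used in the local argument. In particular, $\I,\A,\ldots,\A^d$ all lie in $\Alg(\Par(J))$, so $\Alg(\G)\subseteq\Alg(\Par(J))$, and comparing dimensions gives
$$
d+1=\dim\Alg(\G)\le\dim\Alg(\Par(J))=r+1,
$$
which is exactly the asserted inequality $r\ge d$.

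For the equality case, if $r=d$ then the inclusion $\Alg(\G)\subseteq\Alg(\Par(J))$ is an equality of $(d+1)$-dimensional spaces, so each $\J_i$ is a polynomial in $\A$ of degree at most $d$, i.e.\ $\G$ is quotient-polynomial. Conversely, if $\G$ is quotient-polynomial then by definition $\J_i\in\Alg(\G)$ for every $i$, giving the reverse containment $\Alg(\Par(J))\subseteq\Alg(\G)$ and hence $r+1\le d+1$; combined with $r\ge d$ this forces $r=d$. I do not anticipate any serious obstacle: the proof is essentially a word-for-word globalization of the local argument, and the only facts specifically used are the linear independence of the indicator matrices $\J_i$ (immediate from their disjoint supports) and the walk-regularity of $\Par(J)$ (which is precisely what lets us expand each $\A^\ell$ as a linear combination of the $\J_i$).
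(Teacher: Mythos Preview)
Your proof is correct and is exactly the globalization of the local argument that the paper intends: the paper omits the proof, stating that ``it goes along the same lines of reasoning'' as the local Theorem~\ref{ineq&equ}, and your argument---replacing $\e_u$ by $\I$, the $\vchi_i$ by the $\J_i$, and $\Alg(u)$ by $\Alg(\G)$---carries this out precisely.
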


Since $\J_0+\cdots+\J_d=\J$, the all-1 matrix, the sum polynomial $H=\sum_{i=0}^d p_i$ is the Hoffman polynomial satisfying $H(\A)=\J$. Therefore, a quotient-polynomial graph is connected and regular (see Hoffman \cite{hof63}). Moreover, the same reasoning used in \cite{be86}[Th. 2.4] to prove that every orbit polynomial graph is vertex transitive, shows that every quotient-polynomial graph is walk-regular, that is, $\J_0=\I$. Indeed, if $\J_0\neq \I$,  the equality $\J_0\A=\A\J_0$ ($\Alg(\G)$ is a commutative algebra) leads to a contradicion because $\G$ is connected.

Thus, for any vertex $u$, the {\em induced partition} $\Par(u)$ of $V$, with characteristic vectors $\e_u$, $\J_1\e_u$,\ldots, $\J_d\e_u$ is quotient-polynomial since
$$
\J_i=p_i(\A)\qquad \Rightarrow \qquad \vchi_i=\J_i\e_u=p_i(\A)\e_u,\qquad i=0,\ldots,d.
$$

Then, we can summarize all the above results in the `global analogue'  of Theorem \ref{ineq&equ}.

\begin{theorem}
\label{main-theo}
Let $\G$ be a graph with vertex set $V$, and $d+1$ distinct eigenvalues.
Let $\Par(V)=\{J_0,\ldots,J_r\}$ be a partition of $V\times V$ with $J_0=\{(u,u):u\in V\}$.
Then, the following assertions are equivalent:
\begin{itemize}
\item[$(a)$]
$\G$ is a quotient-polynomial graph.
\item[$(b)$]
The partition $\Par(V)$ is walk-regular with $r=d$.
\item[$(c)$]
There exist  polynomials $p_i$ with $\deg p_i\le r$, $i=0,\ldots,r$,
such that $\J_i=p_i(\A)$.
\item[$(d)$]
For every vertex $u$, the induced partition $\Par(u)$ of $V$ is quotient-polynomial with the same polynomials $p_i$.
\item[$(e)$]
$\Alg(\G)=\Alg(\Par(V))$ with basis $\J_0(=\I),\J_1,\ldots,\J_r$.\hfill $\square$
\end{itemize}
\end{theorem}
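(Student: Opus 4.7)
The plan is to close the cycle $(a) \Rightarrow (c) \Rightarrow (e) \Rightarrow (a)$ and to prove $(a) \Leftrightarrow (d)$ separately, treating $(a) \Leftrightarrow (b)$ as already established by the global version of Theorem \ref{ineq&equ} just stated. Throughout, the argument parallels the proof of the local theorem, with $\A^\ell\e_u$ and the characteristic vectors $\vchi_i$ replaced by the matrix powers $\A^\ell$ and the partition matrices $\J_i$.

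For $(a) \Rightarrow (c)$, the definition of quotient-polynomial gives $\J_i \in \Alg(\G) = \Re_d[\A]$ directly, and $r = d$ from $(b)$ makes $\deg p_i \le r$. The main work lies in $(c) \Rightarrow (e)$. Assuming $\J_i = p_i(\A)$ for $i = 0, \ldots, r$, summation yields $\J = q(\A)$ with $q = \sum_i p_i$, so by Hoffman's theorem \cite{hof63} the graph $\G$ is connected and $k$-regular for some $k$; then $\A\J = k\J$ forces $(\A - k\I)q(\A) = \0$, and since the minimal polynomial of $\A$, of degree $d+1$, must divide $(x-k)q(x)$, of degree at most $r+1$, we get $r \ge d$. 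The reverse bound $r \le d$ is immediate, since the $r+1$ matrices $\J_i$ have disjoint $0/1$ supports and are therefore linearly independent in the $(d+1)$-dimensional space $\Alg(\G)$. Hence $r = d$, and $\{\J_0, \ldots, \J_r\}$ is a basis of $\Alg(\G) = \Alg(\Par(V))$, which is precisely $(e)$.

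For $(e) \Rightarrow (a)$, the basis property expresses $\A^\ell = \sum_i \alpha_i^{(\ell)} \J_i$, so the walk counts $a_{uv}^{(\ell)}$ are constant on each $J_i$; conversely, $\w(u,v) = \w(u',v')$ gives $(p_i(\A))_{uv} = (p_i(\A))_{u'v'}$ for all $i$, forcing the two pairs to share a class. This establishes walk-regularity of $\Par(V)$, and combined with $\J_i \in \Alg(\G)$ it yields $(a)$. The remaining equivalence $(a) \Leftrightarrow (d)$ is read off column by column: the identity $\J_i = p_i(\A)$ holds if and only if $\vchi_i = \J_i \e_u = p_i(\A) \e_u$ for every $u \in V$, which is exactly the condition that each induced partition $\Par(u)$ be quotient-polynomial with the same polynomial family $\{p_i\}$.

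The principal obstacle is the step $r = d$ inside $(c) \Rightarrow (e)$, since $(c)$ is stated without assuming walk-regularity of $\Par(V)$ a priori. The Hoffman / minimal-polynomial computation described above is the natural global counterpart of the bound $r \ge d_u$ derived from the annihilating polynomial $m(x) = \prod_{i=0}^{d_u}(x - \mu_i)$ used in the local version of the proof; once $r = d$ is secured, walk-regularity of $\Par(V)$ and the basis statement in $(e)$ follow from a clean dimension count, and the remaining implications become essentially formal.
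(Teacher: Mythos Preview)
Your proposal is correct and follows essentially the same approach the paper intends. The paper does not give a self-contained proof of this theorem; it simply marks it with $\square$ after stating that it ``summarizes all the above results'' as the global analogue of the local Theorem~\ref{ineq&equ} and its companion (whose $(c)\Rightarrow(d)$ step uses exactly the annihilating-polynomial argument you invoke). Your Hoffman/minimal-polynomial computation for $r\ge d$, the disjoint-support linear-independence bound for $r\le d$, and the column-by-column reading of $(a)\Leftrightarrow(d)$ are precisely the global translations of the local arguments the paper is implicitly appealing to.
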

(Notice that the first equality in $(e)$ implies $r=d$.)



\subsection{An Example and some more details}
Let us consider the following example of quotient-polynomial graph:
The circulant graph $\G=\Cay(\Z_{17};\pm 1,\pm 4)$ has vertices $V=\Z_{17}$ and vertex $u$ is adjacent to vertices  $u\pm 1$ and $u\pm 4$. Then, $\G$ is a 4-regular vertex-transitive graph with diameter $D=3$, and spectrum (with numbers rounded to three decimals)
$$
\spec\G=\{4,2.049^4,0.344^4,-2.906^4,-0.488^4\}
$$

\begin{figure}[t]
\begin{center}
\includegraphics[scale=0.8]{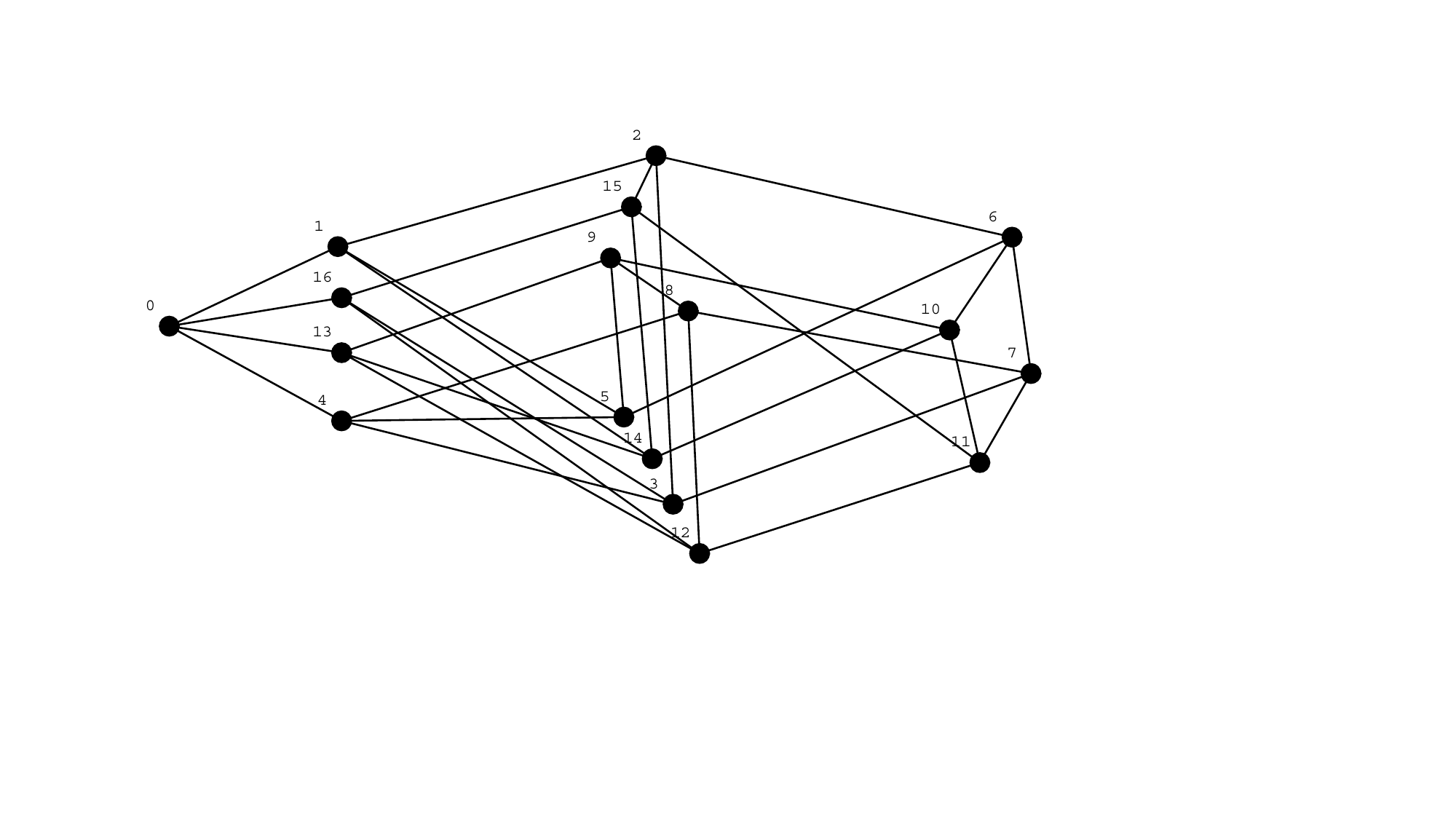}
\vskip -2.5cm
\caption{Circulant graph $\Cay(\Z_{17};1,4)$.}
\label{fig1}
\end{center}
\end{figure}

As shown in Fig. \ref{fig1}, the walk-regular partition around vertex $0$ has clases
$U_0=\{0\}$, $U_1=\{1,4,13,16\}$, $U_2=\{3,5,12,14\}$, $U_3=\{2,8,9,15\}$, and $U_4=\{6,7,10,11\}$.
(The corresponding intersection diagram is shown in Fig. 2.) Then the matrices $\W$ and $\W^+$ of numbers of walks from $0$ to a vertex of $U_i$ are
$$
\W=\left(
\begin{array}{ccccc}
1 & 0 & 0 & 0 & 0 \\
0 & 1 & 0 & 0 & 0 \\
4 & 0 & 2 & 1 & 0 \\
0 & 9 & 1 & 3 & 3 \\
36 & 5 & 24 & 16 & 10
\end{array}
\right), \qquad
\W^+=\left(
\begin{array}{ccccc}
0 & 1 & 0 & 0 & 0 \\
4 & 0 & 2 & 1 & 0 \\
0 & 9 & 1 & 3 & 3 \\
36 & 5 & 24 & 16 & 10\\
20 & 100 & 36 & 55 & 60 \\
\end{array}
\right),
$$
Then, from the inverse of $\W$ we obtain
the  quotient polynomials:
\begin{align*}
p_0(x) & = 1,\\
p_1(x) & = x,\\
p_2(x) & = \frac{1}{26}(3x^4-10x^3-10x^2+75x-36),\\
p_3(x) & = \frac{1}{13}(-3x^4+10x^3+31x^2-75x-16),\\
p_4(x) & = \frac{1}{26}(5x^4-8x^3-56x^2+47x+44).
\end{align*}
whereas the transpose of the intersection matrix turns out to be
$$
\B^{\top}=\W^{-1}\W^+=
\left(
\begin{array}{ccccc}
0 & 1 & 0 & 0 & 0 \\
4 & 0 & 2 & 1 & 0 \\
0 & 2 & 0 & 1 & 1 \\
0 & 1 & 1 & 1 & 1 \\
0 & 0 & 1 & 1 & 2
\end{array}
\right)
$$

As in the case of distance-regular graph, the entries $b_{ij}=(\B)_{ij}$ (shown also in Fig. 2) can also be calculated by using the formulas
$$
b_{ij}=p_{1i}^j=\frac{\tr(\V_1\V_i\V_j)}{\tr(\V_i^2)}=\frac{\langle p_1p_i,p_j\rangle_{\G}}{\|p_j\|_{\G}^2},
$$
where we use the scalar product
$$
\langle f,g\rangle_{\G}=\frac{1}{n}\tr(f(\A)g(\A))=\frac{1}{n}\sum_{i=0}^d m_i f(\lambda_i)g(\lambda_i).
$$
Note that, since $\tr(\J_i\J_j)=0$ for $i\neq j$,  the quotient polynomials $p_i$ are orthogonal with respect to such a product.


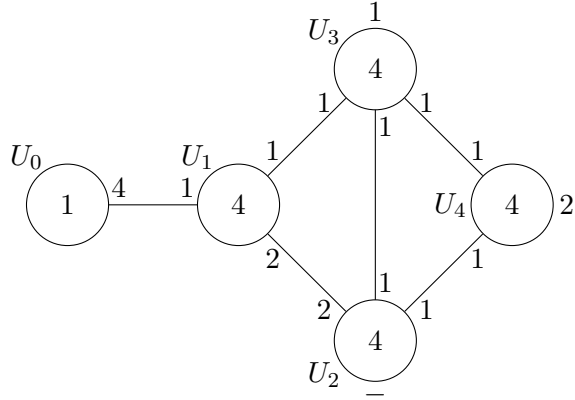
\begin{figure}[h]
\begin{center}
\begin{tikzpicture}[scale=.9]
\draw (1,3) circle [radius=0.6];
\node  at (0.4,3.7) {$U_0$};
\node  at (1.75,3.25) {$4$};
\draw (3.5,3) circle [radius=0.6];
\node  at (2.9,3.7) {$U_1$};
\node  at (2.75,3.25) {$1$};
\node  at (4,3.8) {$1$};
\node  at (4,2.2) {$2$};
\draw (5.5,1) circle [radius=0.6];
\node  at (4.75,0.5) {$U_2$};
\node  at (4.75,1.45) {$2$};
\node  at (5.65,1.85) {$1$};
\node  at (6.25,1.45) {$1$};
\node  at (5.5,0.20) {$-$};
\draw (5.5,5) circle [radius=0.6];
\node  at (4.75,5.55) {$U_3$};
\node  at (4.75,4.5) {$1$};
\node  at (5.65,4.15) {$1$};
\node  at (6.25,4.5) {$1$};
\node  at (5.5,5.85) {$1$};
\draw (7.5,3) circle [radius=0.6];
\node  at (6.6,3) {$U_4$};
\node  at (7,3.8) {$1$};
\node  at (7,2.2) {$1$};
\node  at (8.3,3) {$2$};
\draw (1.6,3)--(2.9,3);
\draw (3.93,2.57)--(5.07,1.43);
\draw (3.93,3.43)--(5.07,4.57);
\draw (5.5,1.6)--(5.5,4.4);
\draw (5.93,1.43)--(7.07,2.57);
\draw (5.93,4.57)--(7.07,3.43);
\node  at (1,3) {$1$};
\node  at (3.5,3) {$4$};
\node  at (5.5,1) {$4$};
\node  at (5.5,5) {$4$};
\node  at (7.5,3) {$4$};
\end{tikzpicture}
\label{fig2}
\end{center}
\caption{Intersection diagram of $\Cay(\Z_{17};1,4)$.}
\end{figure}

%
%
%
%
%
%
%

\section{Related concepts}
In this section we study the relationships of quotient-polynomial graphs with other known combinatorial structures.

\subsection{Distance-regular graphs}
Delsarte \cite{d73} proved that a graph $\G$ with $d+1$ distinct eigenvalues is distance-regular if and only if, for every $i=0,\ldots,d$, $\A_i=p_i(\A)$ for some polynomial $p_i$ of degree $i$; see also Weischel \cite{w82}.
Then, as $D\le d\le r$, the following result is clear.

\begin{proposition}
A quotient-polynomial graph $\G$, with diameter $D$ and $r$ classes, is distance-regular if and only if $D=r$.
\hfill $\square$
\end{proposition}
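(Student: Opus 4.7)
The plan is to apply the characterization of Delsarte (recalled just above the statement): $\G$ is distance-regular if and only if $\A_i = p_i(\A)$ for a polynomial $p_i$ of degree exactly $i$, for every $i=0,\ldots,D$.

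For the forward implication, if $\G$ is distance-regular then the $D+1$ distance matrices are linearly independent elements of $\Alg(\G)$ and, by Delsarte, a basis of it. Since $\dim \Alg(\G)=d+1$, this forces $D=d$. Combined with $r=d$, which holds for any quotient-polynomial graph by Theorem \ref{main-theo}(b), we conclude $D=r$.

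For the converse, assume $D=r$, so that $D=d=r$. By the walk-regularity of the partition $\Par(J)$ together with Lemma \ref{wr->df} (applied to pairs), every class $J_i$ consists of pairs lying at a common distance $h_i$. Since $\G$ has diameter $D$, each of the $D+1$ distance values $0,1,\ldots,D$ is realized by some pair, so $i\mapsto h_i$ is a bijection between $\{0,\ldots,r\}$ and $\{0,\ldots,D\}$. Hence $\{\J_0,\ldots,\J_r\}=\{\A_0,\ldots,\A_D\}$ as sets, and so each $\A_j$ lies in $\Alg(\G)$.

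It remains to upgrade this to Delsarte's exact-degree condition, which I expect to be the only delicate step, since the definition of quotient-polynomial only guarantees $\deg p_i\le d$. I would expand each power $\A^j$ in the basis $\{\A_0,\ldots,\A_D\}$ of $\Alg(\G)$: the coefficient of $\A_k$ is the (well-defined, by walk-regularity) number of walks of length $j$ between two vertices at distance $k$. This coefficient vanishes whenever $k>j$, and for $k=j$ it is strictly positive, since a walk of length $j$ between two vertices at distance $j$ is necessarily a geodesic and at least one exists. Therefore the change-of-basis matrix from $\{\I,\A,\ldots,\A^D\}$ to $\{\A_0,\ldots,\A_D\}$ is lower triangular with nonzero diagonal, hence invertible with lower triangular inverse. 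Inverting shows that $\A_k$ is a polynomial in $\A$ of degree exactly $k$, and Delsarte's criterion then yields distance-regularity, completing the proof.
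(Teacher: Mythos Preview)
Your proof is correct. The paper itself gives essentially no argument here: it just notes $D\le d\le r$ (with $r=d$ for quotient-polynomial graphs by Theorem~\ref{ineq&equ}) and declares the result ``clear'' from Delsarte's characterization. Your write-up supplies exactly the details one needs to make that remark rigorous, and the route you take---identifying $\{\J_i\}$ with $\{\A_i\}$ via the pigeonhole on distance values, then a lower-triangular change-of-basis argument to upgrade $\deg p_i\le d$ to $\deg p_i=i$---is the natural one.

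One minor remark: the triangularity step, while correct, could be replaced by a one-line citation. The paper later (Proposition~4.2, citing \cite{fgy96,ddfgg11}) uses the fact that for a graph with $D=d$, merely having $\A_D\in\Alg(\G)$ already forces distance-regularity. Since you have shown every $\A_i\in\Alg(\G)$, in particular $\A_D\in\Alg(\G)$, you could invoke that result directly. Your self-contained argument is arguably cleaner for the reader, though.
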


Two generalizations of distance-regular graphs are now considered in the two following subsections.

\subsection{Distance-polynomial graphs}
Inspired by the above characterization of distance-regular graphs, Weichsel \cite{w82} defined a graph $\G$ with diameter $D$ to be {\em distance-polynomial} if $\A_0,\ldots,\A_D\in \Alg (\G)$. (That is $\A_i=p_i(\A)$  for every $i=0,\ldots,D$ with no condition on the degrees of the polynomials $p_i$.)
Then any distance-regular graph is also distance-polynomial, but the converse does not hold. For instance, any regular graph with diameter two is easily proved to be distance-polynomial (but not necessarily strongly regular). The next result gives a condition for having the equivalence.

\begin{proposition}
A distance-polynomial graph $\G$, with diameter $D$ and $d+1$ distinct eigenvalues, is distance-regular if and only if $D=d$.
\end{proposition}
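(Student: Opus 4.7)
My plan is to reduce the statement to Delsarte's characterization, quoted at the start of the subsection: it suffices to show that, for each $i=0,\ldots,D$, one has $\A_i=p_i(\A)$ with $\deg p_i$ \emph{equal to} $i$. The forward direction is almost immediate: if $\G$ is distance-regular then Delsarte's theorem supplies polynomials $p_0,\ldots,p_d$ of degrees $0,\ldots,d$ with $\A_i=p_i(\A)$, which forces $D\ge d$ (otherwise $\A_d=0=p_d(\A)$ would contradict $\deg p_d=d$), and combined with the standard inequality $D\le d$ (obtained by noting that $\I,\A,\ldots,\A^D$ are linearly independent, as the entry $(\A^i)_{uv}$ at a fixed pair with $\dist(u,v)=D$ vanishes for $i<D$ and is positive for $i=D$), this gives $D=d$.

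For the converse, assume $\G$ is distance-polynomial with $D=d$. First I observe that $\{\A_0,\A_1,\ldots,\A_D\}$ is a basis of $\Alg(\G)$: each $\A_i$ lies in $\Alg(\G)$ by hypothesis, and the $\A_i$ are nonzero $\{0,1\}$-matrices with pairwise disjoint supports (the distance classes partition $V\times V$), hence linearly independent; since $\dim\Alg(\G)=d+1=D+1$, they span.

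Next I expand each power of $\A$ in this basis,
\[
\A^{\ell}=\sum_{i=0}^{D} c_{\ell i}\,\A_i \qquad (\ell=0,1,\ldots,D).
\]
Because the $\A_i$ have disjoint $\{0,1\}$-supports, the coefficient $c_{\ell i}$ is simply the common value of $(\A^{\ell})_{uv}$ for any $(u,v)$ with $\dist(u,v)=i$. Two elementary observations then pin down a triangular shape: (a) if $i>\ell$ there is no walk of length $\ell$ between vertices at distance $i$, so $c_{\ell i}=0$; (b) if $i=\ell$, any geodesic between such vertices is a walk of length $\ell$, giving $c_{\ell\ell}>0$.

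Hence the change-of-basis matrix $C=(c_{\ell i})$ from $(\I,\A,\ldots,\A^D)$ to $(\A_0,\ldots,\A_D)$ is lower triangular with strictly positive diagonal, and so is $C^{-1}$. Reading off its rows yields polynomials $p_i$ with $\deg p_i=i$ satisfying $\A_i=p_i(\A)$, and Delsarte's characterization completes the proof. The only point requiring care, as anticipated, is the simultaneous use of (a) and (b): it is the existence of geodesics that makes the diagonal strictly positive and thereby makes the triangular transition matrix invertible.
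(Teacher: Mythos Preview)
Your proof is correct. Both directions are sound: the forward direction is the standard $D\le d$ combined with the nonvanishing of $p_d(\A)$, and for the converse the lower-triangular change-of-basis argument cleanly produces polynomials of the right degrees so that Delsarte's criterion applies. (One tiny quibble: your sentence justifying $D\le d$ only literally shows $\A^D\notin\langle\I,\ldots,\A^{D-1}\rangle$; the same observation applied to a pair at distance $k$ for each $k\le D$ gives the full linear independence, but $D\le d$ is of course standard.)

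Your route, however, differs from the paper's. For sufficiency the paper does not attempt to recover polynomials of each degree; instead it extracts from the hypothesis only the single fact that $\A_D=\A_d\in\Alg(\G)$ and then invokes a known characterization (from \cite{fgy96,ddfgg11}) to the effect that $\A_d\in\Alg(\G)$ alone forces distance-regularity. Thus the paper's argument is shorter and actually uses much less of the distance-polynomial hypothesis---showing in effect that the condition on the single matrix $\A_d$ is what carries the weight---but it relies on an external, nontrivial result. Your argument, by contrast, is entirely self-contained and elementary: it uses the full hypothesis $\A_0,\ldots,\A_D\in\Alg(\G)$ to verify Delsarte's criterion directly via the triangular transition matrix. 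Each approach has its merits: the paper's highlights a sharper sufficient condition, while yours avoids black-boxing the key step.
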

\begin{proof}
Since every distance-regular graph is distance-polynomial and has diameter $D=d$,
necessity is clear.
To prove sufficiency, assume that $\G$ is distance-polynomial with $D=d$. Then $\A_D=\A_d\in \Alg(\G)$ which, as it was proved in \cite{fgy96,ddfgg11}, implies that $\G$ is distance-regular.
\end{proof}

In our context,

\begin{proposition}
Let $\G$ be a quotient-polynomial graph with diameter $D$ and $d+1$ distinct eigenvalues. Then,
$\G$ is also distance-polynomial.
\end{proposition}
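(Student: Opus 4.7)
The plan is to exploit the fact that, by definition, a quotient-polynomial graph comes equipped with a walk-regular partition $\Par(J)=\{J_0,J_1,\ldots,J_d\}$ of $V\times V$ whose characteristic matrices $\J_0,\ldots,\J_d$ all lie in $\Alg(\G)$. Since $\Alg(\G)$ is closed under sums, it suffices to show that each distance matrix $\A_h$ can be written as a sum of some of the $\J_i$'s.

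First I would invoke the (global version of) Lemma \ref{wr->df}: in a walk-regular partition of $V\times V$, any two pairs $(u,v),(x,y)\in J_i$ satisfy $\dist(u,v)=\dist(x,y)$, for otherwise the shorter distance would yield a nonzero walk count $a^{(\ell)}_{uv}$ while the longer one gives $a^{(\ell)}_{xy}=0$, contradicting the constancy of $\w(u,v)$ on $J_i$. Hence there is a well-defined distance function $h:\{0,\ldots,d\}\to\{0,\ldots,D\}$ with $h(i)=\dist(u,v)$ for any $(u,v)\in J_i$.

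Next, for each $h\in\{0,\ldots,D\}$, the set of pairs at distance $h$ is the disjoint union $\bigcup_{i:\,h(i)=h} J_i$, so
\[
\A_h=\sum_{i:\,h(i)=h}\J_i=\sum_{i:\,h(i)=h} p_i(\A)\in\Alg(\G),
\]
which is exactly the statement that $\G$ is distance-polynomial.

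There is no genuine obstacle here; the only point that needs care is the justification that classes of a walk-regular partition are distance-faithful, which is already available from the global analogue of Lemma \ref{wr->df} remarked upon in Section \ref{Qp-graphs}. Everything else is a one-line consequence of the closure of $\Alg(\G)$ under addition.
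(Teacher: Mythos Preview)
Your proof is correct and follows essentially the same approach as the paper: both express each distance matrix $\A_h$ as the sum of those $\J_i$ whose class $J_i$ consists of pairs at distance $h$, and conclude since each $\J_i\in\Alg(\G)$. The paper encodes the indexing set as $\{j:\tr(\A_h\J_j)\neq 0\}$ rather than via your distance map $h(i)$, but these describe the same set precisely because the walk-regular classes are distance-faithful, which you correctly invoke.
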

\begin{proof}
Let $\G$ have the  walk-regular partition $\Par(V)=\{J_0,\ldots,J_d\}$ with corresponding matrices $\J_j=q_j(\A)$ for some polynomials $q_j$, $j=0,\ldots,d$.  Then, the polynomials
$$
p_i=\sum_{j\, :\, tr(\mbox{\scriptsize $\A_i\J_j$})\neq 0} q_j,\qquad i=0,\ldots,D,
$$
with $D\le d$, clearly satisfy $p_i(\A)=\A_i$ for $i=0,\ldots,D$, and the result follows.
\end{proof}

\begin{figure}[t]
\begin{center}
\includegraphics[scale=.9]{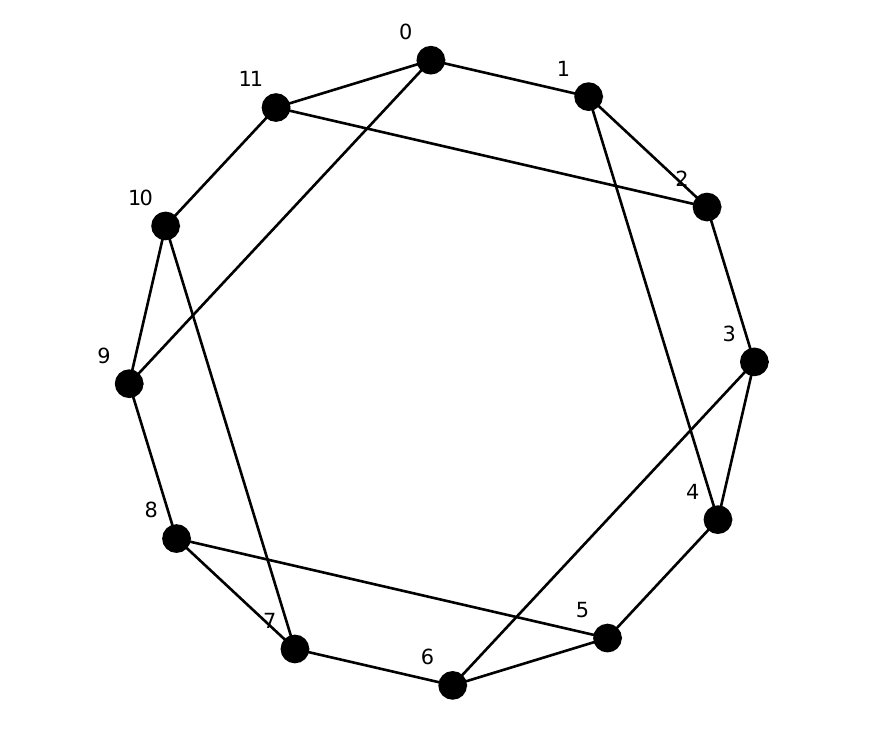}
\caption{Chordal ring $(12,4)$ or prism $Y_6\cong K_2\square C_6$.}
\label{fig3}
\end{center}
\end{figure}

However, the converse result does not hold, even if we require $\G$ to have the same equitable walk-regular partition around each of its vertices.
A counterexample is the so-called `chordal ring' $(12,4)$ or prism $Y_6$ shown in Fig. \ref{fig3}.
This (bipartite) graph has
$$
\spec Y_6=\{3,2^2,1^1,0^4,-1^1,-2^2,-3\},
$$
and it has the walk-regular (and regular) partition with intersection
diagram shown in Fig. \ref{fig4}.

\begin{figure}[h]
\begin{center}
\begin{tikzpicture}[scale=.9]

\draw (1,3) circle [radius=0.6];
\node  at (1,3) {$1$};
\node  at (1.5,3.75) {$2$};
\node  at (1.5,2.25) {$1$};

\draw (3,1) circle [radius=0.6];
\node  at (3,1) {$1$};
\node  at (2.25,1.5) {$1$};
\node  at (3.75,0.75) {$2$};

\draw (3,5) circle [radius=0.6];
\node  at (3,5) {$2$};
\node  at (2.25,4.5) {$1$};
\node  at (3.5,4.25) {$1$};
\node  at (3.75,5.25) {$1$};

\draw (7,1) circle [radius=0.6];
\node  at (7,1) {$2$};
\node  at (6.25,0.75) {$1$};
\node  at (6.5,1.75) {$1$};
\node  at (7.75,0.75) {$1$};

\draw (7,5) circle [radius=0.6];
\node  at (7,5) {$2$};
\node  at (6.25,5.25) {$1$};
\node  at (7.75,5.25) {$1$};
\node  at (7.5,4.25) {$1$};

\draw (11,1) circle [radius=0.6];
\node  at (11,1) {$2$};
\node  at (10.25,0.75) {$1$};
\node  at (10.5,1.75) {$1$};
\node  at (11.75,1.5) {$1$};

\draw (11,5) circle [radius=0.6];
\node  at (11,5) {$1$};
\node  at (10.25,5.25) {$2$};
\node  at (11.75,4.5) {$1$};

\draw (13,3) circle [radius=0.6];
\node  at (13,3) {$1$};
\node  at (12.5,3.75) {$1$};
\node  at (12.5,2.25) {$2$};
\draw (1.43,2.57)--(2.57,1.43);
\draw (1.43,3.43)--(2.57,4.57);

\draw (3.6,1)--(6.4,1);
\draw (3.6,5)--(6.4,5);
\draw (3.43,4.57)--(6.57,1.43);

\draw (7.6,1)--(10.4,1);
\draw (7.43,4.57)--(10.57,1.43);
\draw (7.6,5)--(10.4,5);

\draw (11.43,1.43)--(12.57,2.57);
\draw (11.43,4.57)--(12.57,3.43);

\end{tikzpicture}
\caption{Intersection diagram of $Y_6$.}
\label{fig4}
\end{center}
\end{figure}
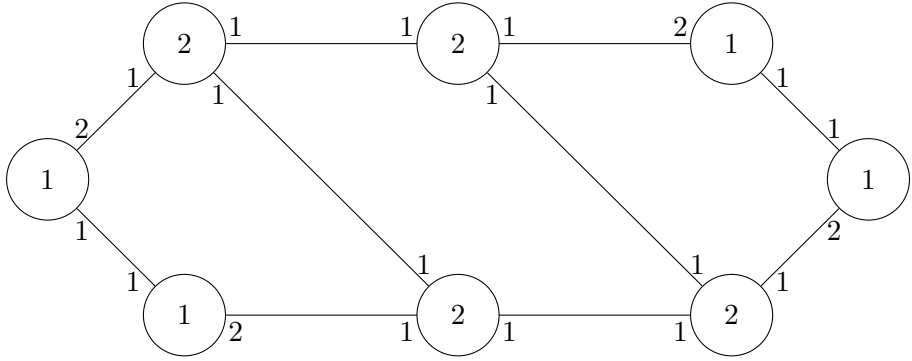

Moreover it is distance-polynomial with polynomials
\begin{align*}
p_0(x) & = 1,\\
p_1(x) & = x,\\
p_2(x) & = \frac{1}{30}(2x^6-25x^4+83x^2-60),\\
p_3(x) & = \frac{1}{20}(x^5-5x^3-16x),\\
p_4(x) & = \frac{1}{20}(-x^6+15x^4-54x^2+20).
\end{align*}
For instance, the matrices $\A_1=\A$ and $\A_4=p_4(\A)$ turn out to be
$$
\left(
\begin{array}{cccccccccccc}
0& 1& 0& 0& 0& 0& 0& 0& 0& 1& 0& 1\\
1& 0& 1& 0& 1& 0& 0& 0& 0& 0& 0& 0\\
0& 1& 0& 1& 0& 0& 0& 0& 0& 0& 0& 1\\
0& 0& 1& 0& 1& 0& 1& 0& 0& 0& 0& 0\\
0& 1& 0& 1& 0& 1& 0& 0& 0& 0& 0& 0\\
0& 0& 0& 0& 1& 0& 1& 0& 1& 0& 0& 0\\
0& 0& 0& 1& 0& 1& 0& 1& 0& 0& 0& 0\\
0& 0& 0& 0& 0& 0& 1& 0& 1& 0& 1& 0\\
0& 0& 0& 0& 0& 1& 0& 1& 0& 1& 0& 0\\
1& 0& 0& 0& 0& 0& 0& 0& 1& 0& 1& 0\\
0& 0& 0& 0& 0& 0& 0& 1& 0& 1& 0& 1\\
1& 0& 1& 0& 0& 0& 0& 0& 0& 0& 1& 0
\end{array}
\right),\quad
\left(
\begin{array}{cccccccccccc}
0& 0& 0& 0& 0& 0& 1& 0& 0& 0& 0& 0\\
0& 0& 0& 0& 0& 0& 0& 1& 0& 0& 0& 0\\
0& 0& 0& 0& 0& 0& 0& 0& 1& 0& 0& 0\\
0& 0& 0& 0& 0& 0& 0& 0& 0& 1& 0& 0\\
0& 0& 0& 0& 0& 0& 0& 0& 0& 0& 1& 0\\
0& 0& 0& 0& 0& 0& 0& 0& 0& 0& 0& 1\\
1& 0& 0& 0& 0& 0& 0& 0& 0& 0& 0& 0\\
0& 1& 0& 0& 0& 0& 0& 0& 0& 0& 0& 0\\
0& 0& 1& 0& 0& 0& 0& 0& 0& 0& 0& 0\\
0& 0& 0& 1& 0& 0& 0& 0& 0& 0& 0& 0\\
0& 0& 0& 0& 1& 0& 0& 0& 0& 0& 0& 0\\
0& 0& 0& 0& 0& 1& 0& 0& 0& 0& 0& 0
\end{array}
\right),
$$
respectively. (Notice that $Y_6$ is antipodal.)
Hovewer, as $r>d$, this graph is not quotient-polynomial.

\subsection{Orbit polynomial graphs}
The definition of orbit polynomial graph $\G$, due to Beezer \cite{be86,be87} is similar to that of quotient polynomial graph, but now the classes $J_0,\ldots,J_r$ of the partition of $V\times V$ are the orbits of the action of the automorphism group $\Aut\G$ on such a set. Namely
$(u,v)$ and $(w,x)$ are in the same equivalence class $J_i$ if and only if there is some $\sigma\in \Aut\G$
such that $(w,x)=(\sigma(u),\sigma(v))$. Then, the $n\times n$ matrices, $\J_0,\ldots,\J_r$,
representing the equivalence classes $J_i$, are defined as in \eqref{equiv-matrices}, and $\G$ is said to be orbit polynomial whenever $\J_i\in\Alg(\G)$ for all $i=0,\ldots,r$.

Since every automorphism preserves distance and number of walks between vertices, the following result is clear:

\begin{proposition}
Every orbit polynomial graph is a quotient-polynomial graph.\hfill{ }$\square$
\end{proposition}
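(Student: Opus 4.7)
The plan is to show that the orbit partition of $V\times V$ under $\Aut\G$ refines the walk-regular partition, and then exhibit each walk-regular class matrix as a sum of orbit class matrices.

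First I would record the key invariance: any $\sigma\in\Aut\G$ satisfies $(\A^{\ell})_{uv}=(\A^{\ell})_{\sigma(u)\sigma(v)}$ for all $\ell\ge 0$, since the permutation matrix $\P_\sigma$ commutes with $\A$ and hence with every $\A^\ell$. Consequently, if $(u,v)$ and $(w,x)=(\sigma(u),\sigma(v))$ lie in the same $\Aut\G$-orbit on $V\times V$, then $\w(u,v)=\w(w,x)$, and by the Lemma in Section 2, also $\m(u,v)=\m(w,x)$.

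Next I would translate this into a statement about partitions. Let $\Par^{\mathrm{orb}}=\{K_0,\ldots,K_s\}$ denote the orbit partition and $\Par^{\mathrm{wr}}=\{J_0,\ldots,J_r\}$ the walk-regular partition of $V\times V$. The previous step shows that every orbit $K_k$ is entirely contained in a single walk-regular class $J_{i(k)}$; in other words $\Par^{\mathrm{orb}}$ is a refinement of $\Par^{\mathrm{wr}}$. At the level of indicator matrices this gives
\[
\J_i \;=\!\!\!\sum_{k\,:\,K_k\subseteq J_i}\!\!\! \J^{\mathrm{orb}}_k,\qquad i=0,\ldots,r,
\]
where $\J_i^{\mathrm{orb}}$ is the $0/1$-matrix associated with $K_k$ as in \eqref{equiv-matrices}.

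Finally, I would invoke the hypothesis that $\G$ is orbit polynomial, which by definition gives $\J^{\mathrm{orb}}_k\in\Alg(\G)$ for every $k$. Since $\Alg(\G)$ is closed under sums, the displayed identity forces $\J_i\in\Alg(\G)$ for every $i=0,\ldots,r$. Together with the fact that $\Par^{\mathrm{wr}}$ is by construction a walk-regular partition, this is exactly the definition of a quotient-polynomial graph, completing the proof. There is no real obstacle here: the whole argument rests on the elementary observation that automorphisms preserve walk counts, so the only thing worth emphasizing is the refinement step, which is what makes the orbit class matrices lie below the walk-regular class matrices inside $\Alg(\G)$.
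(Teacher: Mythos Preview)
Your proof is correct and follows exactly the approach the paper indicates: automorphisms preserve walk counts, so the orbit partition refines the walk-regular partition, and hence each $\J_i$ is a sum of orbit class matrices already lying in $\Alg(\G)$. The paper states this in a single sentence (``every automorphism preserves distance and number of walks between vertices''), and your write-up simply fleshes out the details.
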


%

\subsection{Association schemes}
Recall that a (symmetric) association scheme $\Alg$ with $d$ classes can be defined
as a set of $d$ graphs $\G_i = (V,E_i)$, $i=1,\ldots, d$, on the same vertex set $V$, with adjacency matrices $\A_i$ satisfying
$\sum_{i=0}^d \A_i = \J$, with $\A_0 := I$,  and $\A_i\A_j =\sum_{k=0} p^k_{ij} \A_k$, for
some integers $p^k_{ij}$, $i,j,k=0,\ldots,d$. Then, following Godsil [24] , we say that the graph $\G_i$ is the $i$-th class of the scheme, and so we indistinctly use the words ``graph" or ``class" to mean the same thing.
(For more details, see \cite{bcn89,d73,g93}.)

A $d$-class association scheme $\Alg$ is said to be {\em generated} by one of its matrices $\A_i$ (or graph $\G_i$) if it determines the other relations, that is,
the powers $\I,\A_i,\ldots,\A_i^d$ and $\J$ span the Bose-Mesner algebra of $\Alg$.

In particular, if $\G_i$ is connected, then it generates the whole scheme if and only if it has $d+1$ distinct eigenvalues. Then, since a quotient polynomial graph $\G$  is connected, Theorem \ref{main-theo}$(e)$ yields:

\begin{theorem}
Let $\G$ be a graph with $d+1$ distinct eigenvalues. Then, $\G$ is the connected generating graph of a $d$-class association scheme $\Alg$ if and only if $\G$ is a quotient-polynomial graph.\hfill $\square$
\end{theorem}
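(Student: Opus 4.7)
The plan is to exploit the characterization in Theorem \ref{main-theo}$(e)$, which says that $\G$ is quotient-polynomial iff $\Alg(\G)$ coincides with the linear span of the $0$–$1$ matrices $\J_0(=\I),\J_1,\ldots,\J_d$ of the walk-regular partition $\Par(J)$ of $V\times V$. This turns the theorem into a direct comparison between the basis $\{\J_0,\ldots,\J_d\}$ and the basis $\{\A_0,\ldots,\A_d\}$ of a Bose–Mesner algebra.

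For the \emph{only if} direction, I would assume $\G$ is quotient-polynomial and define $\A_i:=\J_i$. First I would check the four axioms of a symmetric $d$-class association scheme: (i) $\A_0=\I$ follows because quotient-polynomial graphs are walk-regular, as established right before Theorem \ref{main-theo}; (ii) $\sum_i \A_i=\J$ is immediate from the $J_i$ being a partition of $V\times V$; (iii) each $\A_i$ is symmetric because the walk-regular equivalence relation is symmetric ($(\A^\ell)_{uv}=(\A^\ell)_{vu}$); (iv) the multiplicative closure $\A_i\A_j=\sum_k p_{ij}^k \A_k$ holds because $\Alg(\G)=\langle\J_0,\ldots,\J_d\rangle$ is an algebra, and the constants $p_{ij}^k$ are non-negative integers since each entry $(\J_i\J_j)_{uv}$ counts pairs $w$ with $(u,w)\in J_i$, $(w,v)\in J_j$ and is constant on each class $J_k$ (uniqueness of expansion in the $\J_k$-basis). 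Connectedness of $\G$ is already part of the definition of quotient-polynomial graph (via Hoffman), and the generating property is exactly $\Alg(\G)=$ Bose–Mesner algebra, i.e., Theorem \ref{main-theo}$(e)$.

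For the \emph{if} direction, I would start from the assumption that $\G$ is connected with $d+1$ distinct eigenvalues and generates a $d$-class scheme with matrices $\A_0,\ldots,\A_d$. By Hoffman's polynomial, $\J\in \Alg(\G)$, so the generating hypothesis gives $\Alg(\G)=\langle \I,\A,\ldots,\A^d\rangle=\langle \A_0,\ldots,\A_d\rangle$, the full Bose–Mesner algebra. In particular each $\A_k$ is a polynomial in $\A$, say $\A_k=q_k(\A)$. The key step is then to identify the scheme partition $\{A_0,\ldots,A_d\}$ of $V\times V$ with the walk-regular partition: on one hand, every power $\A^\ell$ lies in $\langle \A_0,\ldots,\A_d\rangle$, so the walk counts $(\A^\ell)_{uv}$ are constant on each scheme class; on the other hand, if $(u,v)$ and $(x,y)$ have identical walk vectors $\w(u,v)=\w(x,y)$, then $(\A_k)_{uv}=q_k(\A)_{uv}=q_k(\A)_{xy}=(\A_k)_{xy}$ for every $k$, and since the $\A_k$ are $0$-$1$ matrices whose supports partition $V\times V$, $(u,v)$ and $(x,y)$ lie in the same scheme class. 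Hence the scheme partition coincides with $\Par(J)$ and, setting $r:=d$, Theorem \ref{main-theo}$(e)$ applies, yielding that $\G$ is quotient-polynomial.

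The main obstacle I foresee is the argument that the scheme partition equals the walk-regular partition in the \emph{if} direction; everything else is bookkeeping around Theorem \ref{main-theo}$(e)$. The equality relies crucially on $\A$ generating the scheme (so that every $\A_k$ is a polynomial in $\A$) together with the fact that $\A_0,\ldots,\A_d$ are pairwise disjointly supported $0$–$1$ matrices, which lets one promote ``equal walk counts'' to ``same class.'' Once that identification is made, both implications fall out immediately from the equivalent formulations in Theorem \ref{main-theo}.
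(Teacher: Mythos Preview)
Your proposal is correct and follows the same overall strategy as the paper: both directions are reduced to Theorem~\ref{main-theo}$(e)$, and the ``only if'' direction is handled in the paper with the same checks you spell out (the paper simply says the $\J_i$ ``clearly satisfy the conditions''). The one noteworthy difference is in the ``if'' direction: the paper obtains the polynomials $p_i$ by interpolation, writing $\A_i=\sum_j P_{ji}\E_j$ in terms of the scheme eigenvalues $P_{ji}$ and taking $p_i$ with $p_i(\lambda_j)=P_{ji}$, whereas you instead argue combinatorially that the scheme partition coincides with the walk-regular partition $\Par(J)$ (using that each $\A_k=q_k(\A)$ and that the $\A_k$ have disjoint $0$--$1$ supports). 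Your route is slightly more self-contained in that it makes explicit the identification $\A_i=\J_i$ which the paper leaves implicit when it invokes Theorem~\ref{main-theo}; the paper's route, on the other hand, gives a concrete formula for the polynomials $p_i$ in terms of the $P$-matrix of the scheme.
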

\begin{proof}
If $\G$ is quotient polynomial, it is connected and the matrices $\J_0,\ldots,\J_d$ clearly satisfy the conditions  for being an association scheme $\Alg$ with $d$ classes. Conversely, if $\G$ is a connected graph that generates a $d$-class association scheme $\Alg$, its adjacency matrix $\A$ has $d+1$ distinct eigenvalues $\lambda_0,\ldots,\lambda_d$, and $\Alg$ has bases $\I,\A,\ldots,\A_d$ and $\E_0,\ldots, \E_d$ (the minimal idempotents) related by the equalities
$$
\A_i=\sum_{j=0}^d P_{ji} \E_j,\qquad i=0,\ldots,d,
$$
(with $P_{ji}\in \Re$ being the eigenvalues of the scheme, $P_{j1}=\lambda_j$).
Thus, the polynomial $p_i\in \Re_d[x]$ satisfying $p_i(\lambda_j)=P_{ji}$ satisfies $p_i(\A)=\A_i$ for every $i=0,\ldots,d$, and $\G$ is quotient polynomial by Theorem \ref{main-theo}.
\end{proof}

\noindent{\large \bf Acknowledgments.} The author acknowledges the useful comments and suggestions of E. Garriga and J.L.A. Yebra.
This research is supported by the
{\em Ministerio de Ciencia e Innovaci\'on} (Spain) and the {\em European Regional
Development Fund} under project MTM2011-28800-C02-01, and the {\em Catalan Research
Council} under project 2009SGR1387.

\end{document}